\newcommand\blfootnote[1]{%
  \begingroup
  \renewcommand\thefootnote{}\footnote{#1}%
  \addtocounter{footnote}{-1}%
  \endgroup
}
\definecolor{vegasgold}{rgb}{0.77, 0.7, 0.35}
\definecolor{darkgoldenrod}{rgb}{0.72, 0.53, 0.04}
\definecolor{gold(metallic)}{rgb}{0.83, 0.69, 0.22}
\DeclareFontFamily{U}{wncy}{}
\DeclareFontShape{U}{wncy}{m}{n}{<->wncyr10}{}
\DeclareSymbolFont{mcy}{U}{wncy}{m}{n}
\DeclareMathSymbol{\Sh}{\mathord}{mcy}{"58}
\newtheorem{theorem}{Theorem}[section]
\newtheorem{lemma}[theorem]{Lemma}
\newtheorem*{theorem*}{Theorem}
\newtheorem*{ass*}{Assumption}
\newtheorem{definition}[theorem]{Definition}
\newtheorem{corollary}[theorem]{Corollary}
\newtheorem{remark}[theorem]{Remark}
\newtheorem{proposition}[theorem]{Proposition}
\newtheorem{question}[theorem]{Question}
\newcommand{\Z}{\mathbb{Z}}
\newcommand{\p}{\mathfrak{p}}
\newcommand{\Q}{\mathbb{Q}}
\newcommand{\F}{\mathbb{F}}
\newcommand{\cO}{\mathcal{O}}
\newcommand{\op}[1]{\operatorname{#1}}
\newcommand\mtx[4] { \left( {\begin{array}{cc}
 #1 & #2 \\
 #3 & #4 \\
 \end{array} } \right)}
\numberwithin{equation}{section}
\begin{document}

\title[Galois representations over function fields]{Galois representations over function fields that are ramified at one prime}

\author[A.~Ray]{Anwesh Ray}
\address[Ray]{Chennai Mathematical Institute, H1, SIPCOT IT Park, Kelambakkam, Siruseri, Tamil Nadu 603103, India}
\email{ar2222@cornell.edu}
\blfootnote{Corresponding author email: ar2222@cornell.edu}

\keywords{Galois representations, Drinfeld modules, function field arithmetic, ramification of Tate modules}
\subjclass[2020]{11F80, 11G09}

\maketitle

\begin{abstract}
 Let $\mathbb{F}_q$ be the finite field with $q$ elements, $F:=\mathbb{F}_q(T)$ and $F^{\operatorname{sep}}$ a separable closure of $F$. Set $A$ to denote the polynomial ring $\F_q[T]$. Let $\p$ be a non-zero prime ideal of $A$, and $\mathcal{O}$ be the completion of $A$ at $\mathfrak{p}$. Given any integer $r\geq 2$, I construct a Galois representation $\rho:\operatorname{Gal}(F^{\op{sep}}/F)\rightarrow \operatorname{GL}_r(\cO)$ which is unramified at all non-zero primes $\mathfrak{l}\neq \mathfrak{p}$ of $A$, and whose image is a finite index subgroup of $\operatorname{GL}_r(\cO)$. Moreover, if the degree of $\p$ is $1$, then $\rho$ is also unramified at $\infty$. 
\end{abstract}

\section{Introduction}
\par Given a global field $F$, let $F^{\op{sep}}$ be a choice of separable closure of $F$, and set $\op{G}_F:=\op{Gal}(F^{\op{sep}}/F)$. Although class-field theory provides us with a complete description of the maximal abelian quotient of $\op{G}_F$, the structure of $\op{G}_F$ is shrouded in mystery. A systematic approach to study questions at the heart of number theory is to classify representations of $\op{G}_F$. When $F$ is a number field, such representations are known to arise from geometry, for instance, from abelian varieties and certain automorphic representations. On the other hand, when $F$ is a global function field over a finite field, such representations are known to arise from Drinfeld modules and Drinfeld modular forms. In this article, the field $F$ is taken to be rational function field $\F_q(T)$, where $\F_q$ is a finite field with $q$-elements. This is the function field of the projective line $\mathbb{P}^1_{/\F_q}$, and is the function field analogue of the field of rational numbers $\Q$. Set $A$ to denote the polynomial ring $\F_q[T]$. 
\par The inverse Galois problem for $F$ can be interpreted as follows: given a finite group $G$, does there exist a surjective homomorphism $\varrho: \op{G}_F\rightarrow G$? A stronger variant of this question is to ask for the minimal number $n$, such that there exists a surjection $\varrho:\op{G}_F\rightarrow G$ that is unramified away from $n$ primes. Abhyankar \cite{abhyankar} studied related questions for finite Galois covers of the projective line (over a finite field), with specified ramification. In this context, celebrated \emph{Abhyankar's conjecture} gives a criterion for when a finite Galois cover of the projective line exists ramified at a prescribed number of points. This conjecture was resolved by Raynaud \cite{Raynaud} and Harbater \cite{Harbater}. 
\par In this article, I study a variant of this question, where instead of a finite group $G$, I consider the groups of the form $\op{GL}_r(\cO)$, where $\cO$ is the completion of $A$ at a non-zero prime ideal. Since such homorphisms arise as Galois representations from structures in function field arithmetic, this setting is of natural interest. More precisely, I consider the following question. 
\begin{question}\label{main question}
    Let $q$ be a power of a prime $p$, and $F$ be the rational function field $\F_q(T)$. Given $r\in \Z_{\geq 2}$ and a non-zero prime $\p$ of $A$, does there exist a representation \[\rho:\op{G}_F\rightarrow \op{GL}_r(\cO)\] which is unramified away from $\{\p\}$, and such that the image of $\rho$ is a finite-index subgroup of $\op{GL}_r(\cO)$?
\end{question}
\par Over $\Q$, the analogue of the above question was considered by Greenberg \cite{greenberg2016galois}. Let $p$ be an odd \emph{regular prime number} (i.e., $p$ does not divide the class number of $\Q(\mu_p)$) and $n$ be a positive integer. Greenberg shows that if $p\geq 4\lfloor n/2\rfloor+1$, then there is a Galois representation
$\rho:\op{Gal}(\bar{\Q}/\Q)\rightarrow \op{GL}_n(\Z_p)$ that is unramified at all primes $\ell\notin \{p, \infty\}$. Refinements and generalizations of Greenberg's result has been obtained in various contexts, cf. \cite{CornutRay, katz2020note, RayJNT, Tang, Maletto, Maire, RayTAMS}.

In this article, I obtain an affirmative answer to the Question \ref{main question}. Moreover, one has an explicit description of how the inertia group at $\p$ acts. Before stating the main result, I remind the reader that the degree of $\p$ is defined to be the $\F_q$-dimension of $A/\p$, or equivalently, the degree of an irreducible monic polynomial that generates $\p$. 
\begin{theorem}\label{main thm}
    Let $\p$ be a non-zero prime of $A$ and $\cO$ the completion $A_\p$ with uniformizer $\varpi$. Let $r\geq 2$ be an integer. There is a Galois representation 
    \[\rho:\op{G}_F\rightarrow \op{GL}_r(\cO),\] such that 
    \begin{itemize}
        \item $\rho$ is unramified at all non-zero primes $\mathfrak{l}\neq \p$ of $A$.
        \item If $\op{deg}\p=1$, then $\rho$ is also constructed to be unramified at $\infty:=(1/T)$.
        \item The image of $\rho$ has finite index in $\op{GL}_r(\cO)$.
        \item The restriction of $\rho$ to the inertia group $\op{I}_{\p}$ is of the form
        \[\rho_{|\op{I}_{\p}}=\mtx{\chi}{\ast}{0}{\op{Id}_{r-1}}=\begin{pmatrix}
  \chi & \ast & \ast & \cdots & \ast & \ast \\
   & 1 & 0 & \cdots & 0 & 0\\
    & & 1 & 0 & \cdots & 0 \\
    & &  & \ddots & \vdots & \vdots \\  
    & &  &    & 1 & 0 \\
    & &  &    &  & 1 &   \\
   \end{pmatrix},\] where $\chi$ is a totally ramified character. Thus there is an $(r-1)$-dimensional $\op{Gal}(\bar{F}_{\p}/F_\p)$-quotient on which $\op{I}_{\p}$ acts trivially.
\end{itemize}
\end{theorem}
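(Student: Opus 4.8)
The plan is to obtain $\rho$ as a lift to $\cO$ of a residual representation that already has large image and the required behaviour along $\op{I}_{\p}$. Put $S=\{\p\}$ if $\deg\p=1$ and $S=\{\p,\infty\}$ otherwise, and let $k=A/\p$ be the residue field of $\cO$. The character $\chi$ will come from Carlitz cyclotomic theory: the action of $\op{G}_F$ on the $\p$-power torsion of the Carlitz module is a character $\op{G}_F\to\cO^\times$ which is totally ramified at $\p$; composing with the decomposition $\cO^\times\cong k^\times\times(1+\varpi\cO)$ and projecting onto the pro-$p$ factor produces $\chi\colon\op{G}_F\to 1+\varpi\cO$, still totally ramified at $\p$ (locally it is the pro-$p$ part of the Lubin--Tate character of $\p$, and the wild inertia at $\p$ surjects onto $1+\varpi\cO$) and unramified at $\infty$ (the inertia at $\infty$ in the Carlitz tower is tame, hence dies under the pro-$p$ projection). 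Since $\chi$ reduces to the trivial character modulo $\varpi$, the residual representation to be constructed below may be taken with values in $\op{SL}_r(k)$.

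For the residual representation I would invoke the resolution of Abhyankar's conjecture by Raynaud \cite{Raynaud} and Harbater \cite{Harbater}, in the refined form that allows one to prescribe the inertia at a branch point. The group $\op{SL}_r(k)$ is the normal closure of the root subgroup $U=\{\op{Id}_r+\sum_{j\geq 2}x_jE_{1j}:x_j\in k\}\cong k^{r-1}$; hence there is a connected $\op{SL}_r(k)$-cover of $\mathbb{A}^1_{\overline{\F}_q}$ branched at a single rational point with inertia there equal to $U$. Identifying that point with $\p$ — for $\deg\p=1$ by an $\F_q$-automorphism of $\mathbb{P}^1$, and in general through the several-branch-point version of the conjecture — and then descending to $\F_q$ at the cost of at most a constant field extension (which is everywhere unramified) yields a homomorphism $\barrho\colon\op{G}_F\to\op{GL}_r(k)$ whose image contains $\op{SL}_r(k)$, which is unramified outside $\{\p\}$, and for which $\barrho|_{\op{I}_{\p}}$ has the form $\begin{pmatrix}1&\ast\\0&\op{Id}_{r-1}\end{pmatrix}$.

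It remains to lift $\barrho$ to $\rho\colon\op{G}_F\to\op{GL}_r(\cO)$ that is unramified outside $S$, such that $\rho$ restricted to the decomposition group at $\p$ is an extension of an unramified rank-$(r-1)$ quotient by a rank-one subrepresentation on which $\op{I}_{\p}$ acts through the chosen $\chi$ — so that $\rho|_{\op{I}_{\p}}$ acquires the asserted shape — and whose image is open. I would organize this as a deformation problem: deformations of $\barrho$ to complete local Noetherian $\cO$-algebras with residue field $k$, subject to being unramified outside $S$ and ``ordinary at $\p$'' in the above sense; equivalently, one solves the successive embedding problems $\op{G}_F\twoheadrightarrow\op{GL}_r(\cO/\varpi^{n+1})$ over $\op{GL}_r(\cO/\varpi^n)$ within this class. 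The tangent and obstruction spaces are Selmer and dual Selmer groups attached to $\op{ad}\barrho$ with this local condition, controlled by the Poitou--Tate sequence and the global Euler characteristic formula over $F$; the crucial point is that in characteristic $p$ the local $H^1$ at $\p$ is infinite-dimensional (on account of the wild inertia), which leaves ample room to kill the $H^2$-obstruction, so that a characteristic-zero lift exists, and — by perturbing such a lift by a suitable family of cocycles in the tangent space — to force the image of $\rho$ to contain a congruence subgroup of $\op{GL}_r(\cO)$. As $\barrho(\op{G}_F)\supseteq\op{SL}_r(k)$, the resulting $\rho$ then has open image; and when $\deg\p=1$ one has $S=\{\p\}$, so $\rho$ is unramified at $\infty$.

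I expect the main obstacle to be precisely this last step: reconciling openness of the global image with the reducibility of $\rho$ on the decomposition group at $\p$ — that is, showing that the \emph{constrained} deformation problem (unramified outside $S$, ordinary at $\p$) is unobstructed enough to admit a lift whose image is still open — and, in the case $\deg\p=1$, carrying this out with $S=\{\p\}$ so as not to spoil unramifiedness at $\infty$. A secondary technical point is arranging the Abhyankar-type construction of $\barrho$ so that the inertia at $\p$ lies in the prescribed root subgroup, with the field of definition kept under control. As a consistency check, and as an alternative construction when $\deg\p\geq 2$ (where ramification at $\infty$ is permitted), one may instead take $\rho$ to be the representation of $\op{G}_F$ on the $\p$-adic Tate module of a rank-$r$ Drinfeld $A$-module over $A$ with unit leading coefficient, ordinary reduction at $\p$, and endomorphism ring $A$: good reduction at every finite prime forces unramifiedness there by the criterion of N\'eron--Ogg--Shafarevich, the connected--\'etale sequence of the $\p$-divisible group at $\p$ produces exactly the displayed parabolic shape, and openness of the image is the theorem of Pink and R\"utsche; only the behaviour at $\infty$, where such a Drinfeld module necessarily degenerates, is not controlled this way, which is why the case $\deg\p=1$ needs the argument above.
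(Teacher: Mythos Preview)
Your ``alternative construction'' is in fact the paper's entire proof. The paper takes the explicit Drinfeld module $\phi_T=T+\tau+f(T)\tau^r$, where $f$ generates~$\p$; note the leading coefficient is $f(T)$, \emph{not} a unit, so $\phi$ has stable reduction of rank $1$ at $\p$ (the reduction is the Carlitz module) rather than good ordinary reduction. Good reduction away from $\{\p,\infty\}$ gives unramifiedness there; the Drinfeld--Tate uniformization at $\p$ produces the short exact sequence $0\to T_\p(\phi)^0\to T_\p(\phi)\to T_\p(\bar\phi)\to 0$ and hence the parabolic shape on $\op{I}_\p$; and because the reduction at $\p$ is the Carlitz module $C$ with $\op{End}_{\bar\kappa}(C)=A$, one deduces $\op{End}_{\bar F}(\phi)=A$, after which Pink--R\"utsche gives open image. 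No residual construction, no lifting.

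Where you go wrong is your assertion that a Drinfeld module ``necessarily degenerates'' at $\infty$ and therefore cannot yield unramifiedness there. When $\deg\p=1$, write $f(T)=T-c$ and set $G(x):=(T-c)^{-1}\phi_{T-c}(x)=x+(T-c)^{-1}x^{q}+x^{q^r}$. Its reduction modulo the maximal ideal of $\cO_\infty=\F_q\llbracket T^{-1}\rrbracket$ is $x+x^{q^r}$, separable with $q^r$ distinct roots; Hensel's lemma places every root of $G$ in the maximal unramified extension of $F_\infty$, so $F_\infty(\phi[\p])/F_\infty$ is unramified, and an immediate induction gives the same for $\phi[\p^n]$. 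Thus the $\p$-adic representation is unramified at $\infty$ for $\deg\p=1$ by a two-line argument---exactly the case you thought required your Abhyankar/deformation machinery.

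As for that machinery, there are genuine gaps. The Raynaud--Harbater theorem yields covers over $\bar{\F}_q$; ``descending to $\F_q$ at the cost of at most a constant field extension'' is not an argument---you must simultaneously control the field of definition of the cover, the rationality of the branch locus (for $\deg\p>1$ the prime $\p$ splits into several geometric points), and ensure the resulting Galois image still lands in $\op{GL}_r(k)$. Your lifting paragraph is phrased in number-field language (``characteristic-zero lift'', Poitou--Tate, Euler characteristic), but $\cO=A_\p$ has characteristic $p$; the relevant input here would rather be $\op{cd}_p(\op{G}_F)\le 1$, which does kill global $H^2$ with $p$-torsion coefficients, but you are working with $\op{G}_{F,S}$ and a local ordinary condition at $\p$, and you yourself flag---without resolving---that reconciling this constraint with open image is the crux. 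The paper bypasses all of this by writing down $\phi$ explicitly.
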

\par The Galois representation constructed does also have the property of being geometric in a suitable sense. In fact, it arises from a Drinfeld module $\phi$ of rank $r$ over $F$. It is shown that the endomorphism ring of $\phi$ is naturally isomorphic to $A$. The open image result is then deduced from a theorem of Pink and R\"utsche \cite{PinkRutsche}. This Drinfeld module has good reduction at all other primes $\mathfrak{l}$ of $A$ and has stable reduction at $\p$. The Galois representation has very little ramification at $\p$ and is also unramified at $\infty$ if the degree of $\p$ is $1$. These properties are quite remarkable. The argument showing that the Galois representation is unramified at $\infty$ is delicate and does not readily generalize to higher degrees. 
\section{Preliminary notions}
\par Let $p$ be a prime and $q$ be a power of $p$. Denote by $\F_q$ the finite field with $q$ elements, and $F=\F_q(T)$ be the rational function field over $\F_q$. Let $A$ be the polynomial ring $\F_q[T]$, and $v_\infty$ be the valuation normalized by $v_\infty(T)=-1$. Denote by $\Omega_F$ the isomorphism classes of valuations $v$ of $F$ such that $v\neq v_\infty$. Given a non-zero prime ideal $\p$ and $a\neq 0$ in $A$, set $\op{ord}_{\p}(a)$ to denote the highest power $n$, such that $\p$ divides $a A$. For $a/b\in F^\times$, set 
\[\op{ord}_{\p}(a/b):=\op{ord}_{\p}(a)-\op{ord}_{\p}(b).\] For $v\in \Omega_F$, there is a non-zero prime ideal $\p$ of $A$ such that
\[v=v_{\p}:=\op{ord}_{\p}(\cdot). \]
Denote by $f_{\p}\in A$ the unique monic polynomial generator of $\p$. Given a valuation $v\in \Omega_F$, take $F_v$ to be the completion of $F$ at $v$. For $v=v_\p$, set $A_{v}:=A_{\p}$ be the completion of $A$ at $\p$. Then, $F_v:=F_\p$ is fraction field of $A_v$. Denote by $\kappa_v$ the residue field of $A_v$. Let $\bar{F}$ be the separable closure of $F$, and $\op{G}_F:=\op{Gal}(\bar{F}/F)$ the absolute Galois group of $F$.
 \par Let $K$ be a field equipped with a homomorphism of rings $\gamma: A\rightarrow K$. Such a field is referred to as an \emph{$A$-field}. In practice, $K$ will be $F$, $F_v$ or the residue field $\kappa_v$ for some $v\in \Omega_F$. Such a field is referred to as an $A$-field. The $A$-characteristic $\op{char}_A(K):=0$ if $\gamma$ is injective. Otherwise, if $\gamma$ is not injective, $\op{char}_A(K):=\op{ker}\gamma$. Say that $K$ has \emph{generic characteristic} if $\op{char}_A(K)=0$. 
 
 \par Let $K\{\tau\}$ be the non-commutative $\F_q$-algebra of twisted polynomials over $K$. Elements of $K\{\tau\}$ are polynomials of the form 
\[f(\tau)=\sum_{i=0}^d a_i \tau^i,\] where $a_i\in K$. Addition is defined \emph{term-wise} as follows
\[\begin{split}& \sum_i a_i \tau^i+\sum_i b_i \tau^i:=\sum_i(a_i+b_i) \tau^i,\\
& (a\tau^i) (b \tau^j):=ab^{q^i} \tau^{i+j}.\end{split}\]
The above relations uniquely determine the $\F_q$-algebra structure on $K\{\tau\}$. Let $K[x]$ be the polynomial ring over $K$. A polynomial $f(x)\in K[x]$ is \emph{additive} if the following relation holds in $K[x,y]$ 
\[f(x+y)=f(x)+f(y).\] The polynomial $f$ is $\F_q$-linear if it is additive and for all $\alpha\in \F_q$, one has that 
\[f(\alpha x)=\alpha f(x). \]
Let $K\langle x\rangle $ consist of polynomials in $K[x]$ that are $\F_q$-linear. This set of polynomials is an $\F_q$-subspace, however it is not closed under multiplication. On the other hand, if $f, g\in K\langle x\rangle$, then, their composite $f\circ g$ is also $\F_q$-linear. With respect to addition and composition $K\langle x\rangle $ forms an $\F_q$-algebra. 

\par Given a twisted polynomial $f(\tau)=\sum_i a_i \tau^i$, set $f(x)$ to be the $\F_q$-linear polynomial defined by $f(x):=\sum_i a_i x^{q^i}$. This defines a map 
\[\Pi: K\{\tau\}\rightarrow K\langle x\rangle,\] sending $f(\tau)$ to $f(x)$, and $\Pi$ is an isomorphism of $\F_q$-algebras. 

\par A twisted polynomial $f(\tau)$ can be written as 
\[f(\tau)=a_h \tau^h+a_{h+1} \tau^{h+1}+\dots +a_d \tau^d,\] where $a_h, a_d\neq 0$. The numbers $h, d$ satisfy $h\leq d$, and are the \emph{height} and \emph{degree} of $f(\tau)$ respectively. I set 
\[\op{ht}_\tau(f):=h\text{ and }\op{deg}_\tau(f):=d.\]
Note that the degree of $f(x)$ has degree $q^{\op{deg}_\tau(f)}$ as a polynomial in $x$. Let $\partial:K\{\tau\}\rightarrow K$ be the \emph{derivative map} sending 
\[\sum_n a_n \tau\mapsto a_0.\] Thus, $\op{ht}(f)=0$ if and only if $\partial(f)\neq 0$. Observe that $\partial(f)$ is equal to the derivative $\frac{d f}{dx}$ (as a function of $x$).
\begin{definition}
    Let $r\geq 2$ be an integer and $K$ be an $A$-field. A \emph{Drinfeld module} of rank $r$ over $K$ is a homomorphism of $\F_q$-algebras 
    \[\phi: A\rightarrow K\{\tau\},\] sending $a\in A$ to $\phi_a\in K\{\tau\}$, such that 
    \begin{itemize}
        \item $\partial(\phi_a)=\gamma(a)$ for all $a\in A$; 
        \item $\op{deg}_\tau(\phi_a)=r\op{deg}_T(a)$. 
    \end{itemize}
    The second condition is equivalent to requiring that $\op{deg}_\tau(\phi_T)=r$.
\end{definition}
Let $\phi$ be a Drinfeld module, and $\bar{K}$ an algebraic closure of $K$. Then, $\bar{K}$ acquires a \emph{twisted $A$-module structure}. Given $a\in A$ and $x\in \bar{K}$, set $a\cdot_\phi x:=\phi_a(x)\in \bar{K}$. Denote the associated twisted $A$-module by $^{\phi}\bar{K}$ the resulting $A$-module. Denote by $K^{\op{sep}}\subseteq \bar{K}$ the separable closure of $K$ and $\op{G}_K:=\op{Gal}(K^{\op{sep}}/K)$ the absolute Galois group of $K$. When $K$ is of generic characteristic, $\phi_a(x)$ is a separable polynomial for all $a\in A$ (cf. \cite[Proposition 3.3.4]{papibook}). Let $\phi: A\rightarrow F\{\tau\}$ be a Drinfeld module of rank $r$ and $a\in A$ be a non-zero element. Here, $\gamma: A\rightarrow F$ is the natural inclusion. Since $\partial a=a\neq 0$, it follows that $\phi_a$ is a separable polynomial. Denote by $\phi[a]$ the subset of $\bar{F}$ consisting of roots of $\phi_a(x)$. For $y\in \phi[a]$, it is easy to see that for $b\in A$, $\phi_b(y)\in \phi[a]$. With this $A$-module structure $\phi[a]$ one has that
\[\phi[a]\simeq \left(A/a\right)^r, \] cf. \cite[Theorem 3.5.2]{papibook}. The Galois representation on $\phi[a]$ is denoted 
\[\rho_{\phi, a}: \op{G}_F\rightarrow \op{GL}_r(A/a).\]

\par Let $\p$ be a non-zero prime ideal of $A$, the $\p$-adic Tate-module is the inverse limit 
\[T_\p(\phi):=\varprojlim_n \phi[\p].\]  The Galois representation associated to $T_{\p}(\phi)$ is denoted 
\[\widehat{\rho}_{\phi, \p}: \op{G}_F\rightarrow \op{GL}_r(A_\p),\] and one may identify $\widehat{\rho}_{\phi, \p}$ with the inverse limit $\varprojlim_n \rho_{\phi, \p}$.

\par Let $\phi$ be a Drinfeld module over $F$ and $v\in \Omega_F$. I denote by $\phi_v$ the localized Drinfeld module over $F_v$. Let $R_v$ be the valuation ring of $F_v$, and $\mathcal{M}_v$ be its maximal ideal. Denote by $\kappa_v$ the residue field $R_v/\mathcal{M}_v$.

\begin{definition}One says that $\phi$ has \emph{stable reduction} at $v$ if there is a Drinfeld module $\psi$ over $F_v$ that is isomorphic to $\phi_v$ with coefficients in $R_v$, for which the reduction $\bar{\psi}$ is a Drinfeld module. The rank of $\bar{\psi}$ is referred to as the \emph{reduction rank} of $\phi$ at $v$. If the reduction rank is $r$, then $\phi$ has \emph{good reduction} at $v$. 
\end{definition} Given a Drinfeld modules $\phi$ and $\psi$ over an $A$-field $K$, a \emph{morphism} $u\in \op{Hom}_K(\phi, \psi)$ is an element $u\in K\{\tau\}$ such that the relationship $u \phi_a=\psi_a u$ holds for all $a\in A$. The Endomorphism ring of $\phi$ is defined as 
\[\op{End}_{\bar{K}}(\phi):=\op{Hom}_{\bar{K}}(\phi, \phi)=\{u\in \bar{K}\{\tau\}\mid u \phi_a=\phi_a u\text{ for all }a\in A\}.\] This ring is in fact is an $A$-algebra, with $a\cdot u:=\phi_a u=u\phi_a$. Denote by 
\[\op{str}_\phi=\op{str}_{\phi, \bar{K}}: A\rightarrow \op{End}_{\bar{K}}(\phi)\] the \emph{structure homomorphism}, mapping $a$ to $\phi_a$. Since $\op{deg}_\tau(\phi_a)=r\op{deg}_T(a)$, it follows that $\op{str}_{\phi}$ is injective. One writes $\op{End}_{\bar{K}}(\phi)=A$ to mean that this structure homomorphism is an isomorphism. The following is a consequence of the \emph{open-image theorem} for Drinfeld-modules due to Pink and R\"utsche \cite{PinkRutsche}. 
\begin{theorem}[Pink-R\"utsche]\label{open image thm}
    Let $\phi$ be a Drinfeld module over $F$ such that $\op{End}_{\bar{F}}(\phi)=A$. Then, for any non-zero prime ideal $\p$ of $A$, the index $[\op{GL}_r(\widehat{A}_{\p}): \op{im}\left(\widehat{\rho}_{\phi, v_\p}\right)]$ is finite.
\end{theorem}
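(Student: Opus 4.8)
The plan is to prove the statement in two stages: first to identify the Zariski closure of the image of $\widehat{\rho}_{\phi,\p}$ inside $\op{GL}_{r}$ over $F_\p$, and then to upgrade this knowledge of the Zariski closure to the openness (finite-index) statement for the $A_\p$-points. The crucial algebraic input for the first stage is the analogue of the Tate conjecture for Drinfeld modules, due to Taguchi: the natural map
\[\op{End}_{\bar{F}}(\phi)\otimes_A A_\p\longrightarrow \op{End}_{A_\p[\op{G}_F]}\big(T_\p(\phi)\big)\]
is an isomorphism, and the Galois representation on $T_\p(\phi)$ is semisimple. Since the hypothesis is $\op{End}_{\bar F}(\phi)=A$, this says that the commutant of $\op{im}(\widehat{\rho}_{\phi,\p})$ in $M_r(A_\p)$ consists only of the scalars $A_\p$; in particular $T_\p(\phi)\otimes_{A_\p}\bar{F}_\p$ is an absolutely irreducible $\op{G}_F$-module.

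First I would determine the connected Zariski closure $G^{\circ}$ of $\op{im}(\widehat{\rho}_{\phi,\p})$ in $\op{GL}_{r,F_\p}$. Semisimplicity together with the scalar commutant forces $G^\circ$ to be reductive, acting irreducibly on the standard $r$-dimensional representation with centralizer equal to the scalars. To pin $G^\circ$ down to all of $\op{GL}_r$, I would use the method of Frobenius tori following Pink: for every prime $\mathfrak{l}\neq\p$ of good reduction the characteristic polynomial of $\widehat{\rho}_{\phi,\p}(\op{Frob}_\mathfrak{l})$ has coefficients in $A$ that are independent of $\p$, and the Zariski closures of the cyclic groups generated by these semisimple Frobenius elements are maximal tori of $G^\circ$ for a density-one set of $\mathfrak{l}$. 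These tori generate a subgroup whose rank and root system force $G^\circ\supseteq\op{SL}_r$; combined with surjectivity of the determinant onto an open subgroup of $A_\p^\times$ — coming from the Galois action on $\wedge^{r}T_\p(\phi)$, which corresponds to a rank-one Drinfeld (Carlitz-type) module of infinite image — one concludes $G^\circ=\op{GL}_r$, so that $\op{im}(\widehat{\rho}_{\phi,\p})$ is Zariski dense in $\op{GL}_r$.

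The last stage, converting Zariski density into openness of the image in $\op{GL}_r(A_\p)$, is the most delicate, and I expect it to be the main obstacle. In characteristic zero this is immediate from $\p$-adic Lie theory, but here $A_\p$ is a local field of positive characteristic, the exponential and logarithm maps are unavailable, and this is the genuine technical heart of the Pink--R\"utsche argument. The image $H:=\op{im}(\widehat{\rho}_{\phi,\p})$ is closed, hence compact, and I would attach to it the Lie-algebra-type invariant $\mathfrak{h}\subseteq\mathfrak{gl}_r(F_\p)$ built from the graded pieces of its congruence filtration, following Pink's dictionary for compact subgroups over local fields; Zariski density should yield $\mathfrak{h}=\mathfrak{gl}_r(F_\p)$. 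The aim is then to show that $H$ contains a full congruence subgroup $1+\p^{N}M_r(A_\p)$ for some $N$. Concretely one analyses the successive quotients $\big(H\cap(1+\p^{n}M_r)\big)/\big(H\cap(1+\p^{n+1}M_r)\big)$ as modules over the residual image, shows that for $n$ large they exhaust $\p^{n}M_r/\p^{n+1}M_r\cong\mathfrak{gl}_r(A/\p)$, and then runs a Frattini/Nakayama-type argument inside the pro-$p$ group $1+\p^{N}M_r(A_\p)$ to propagate fullness back to a single congruence level. The irreducibility of the residual representation, and the resulting near-irreducibility of the adjoint action of the residual image on $\mathfrak{sl}_r(A/\p)$, is exactly what makes this module-theoretic bookkeeping close up; overcoming the positive-characteristic failure of the usual Lie-group dictionary is the crux of the whole theorem.
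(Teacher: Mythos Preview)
The paper does not prove this theorem at all: it is stated as a black-box citation of Pink and R\"utsche, introduced with ``The following is a consequence of the open-image theorem for Drinfeld-modules due to Pink and R\"utsche \cite{PinkRutsche}.'' No argument is given, and the result is only invoked later (in Proposition~\ref{prop 4.4}) once $\op{End}_{\bar F}(\phi)=A$ has been verified.

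Your proposal, by contrast, is an outline of the actual Pink--R\"utsche proof itself. The ingredients you name---Taguchi's Tate-conjecture isomorphism and semisimplicity, Pink's Frobenius-tori method to identify the Zariski closure with $\op{GL}_r$, the determinant via a rank-one Drinfeld module, and the positive-characteristic substitute for $p$-adic Lie theory to pass from Zariski density to openness---are indeed the main steps of that paper. As a sketch of \emph{their} argument it is broadly accurate, and you correctly flag the last step as the technical heart. But as a comparison with \emph{this} paper there is nothing to compare: the author simply quotes the theorem, so your write-up is an expansion of a cited result rather than an alternative to any proof appearing here.
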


\section{Galois representations ramified at one prime}
\par In this section, I prove my main result. Fix an integer $r\geq 2$, a non-zero prime ideal $\p$ of $A$, and set $\cO$ to denote the completion $A_{\p}$. Let $f(T):=f_\p(T)$ be the monic irreducible polynomial that generates $\p$, and let $\phi$ be the Drinfeld module of rank $r$ with coefficients in $F$ defined by 
\[\phi_T:=T+\tau+f(T)\tau^r,\] and set $d:=\op{deg}_T(f(T))$.
\subsection{Good reduction away from $\{\p, \infty\}$}
\par In this brief subsection, I describe the reduction type of $\phi$ at all primes $\mathfrak{l}\notin \{\p, \infty\}$.
\begin{lemma}\label{good reduction lemma}
    The Drinfeld module $\phi$ has good reduction at all primes $\mathfrak{l}\notin \{\p, \infty\}$.
\end{lemma}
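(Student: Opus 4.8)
The plan is to check the reduction type prime-by-prime using the explicit formula $\phi_T = T + \tau + f(T)\tau^r$. Fix a prime $\mathfrak{l} \notin \{\mathfrak{p}, \infty\}$, let $R_{\mathfrak{l}}$ be the valuation ring of $F_{\mathfrak{l}}$, $\mathcal{M}_{\mathfrak{l}}$ its maximal ideal, and $\kappa_{\mathfrak{l}}$ the residue field. The first observation is that all three coefficients $T$, $1$, and $f(T)$ of $\phi_T$ lie in $A \subseteq R_{\mathfrak{l}}$, so $\phi$ already has coefficients in $R_{\mathfrak{l}}$ without needing to conjugate by any element; one may take $\psi = \phi_{\mathfrak{l}}$ in the definition of stable reduction. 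Thus it remains only to verify that the reduction $\bar\psi$, obtained by reducing the coefficients of $\phi_T$ modulo $\mathcal{M}_{\mathfrak{l}}$, is again a Drinfeld module of rank $r$ — equivalently, that $\op{deg}_\tau(\bar\psi_T) = r$, i.e. that the leading coefficient $f(T)$ does not vanish in $\kappa_{\mathfrak{l}}$.

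The key step is therefore the assertion that $v_{\mathfrak{l}}(f(T)) = 0$. Since $f = f_{\mathfrak{p}}$ is the monic irreducible generator of $\mathfrak{p}$, the ideal $fA$ is exactly $\mathfrak{p}$, and $\op{ord}_{\mathfrak{l}}(f) = 0$ precisely because $\mathfrak{l} \neq \mathfrak{p}$ and distinct nonzero primes of the PID $A$ are coprime. Hence $f(T)$ is a unit in $R_{\mathfrak{l}}$ and its image $\overline{f(T)} \in \kappa_{\mathfrak{l}}$ is nonzero. Consequently $\bar\psi_T = \bar{T} + \tau + \overline{f(T)}\,\tau^r$ has $\tau$-degree exactly $r$; moreover $\partial(\bar\psi_T) = \bar{T} = \bar\gamma(T)$ where $\bar\gamma : A \to \kappa_{\mathfrak{l}}$ is the reduction of $\gamma$, so $\bar\psi$ satisfies both defining conditions and is a Drinfeld module of rank $r$ over $\kappa_{\mathfrak{l}}$. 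This exhibits good reduction at $\mathfrak{l}$.

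There is essentially no obstacle here: the argument is a direct unwinding of definitions, and the only point requiring the hypothesis $\mathfrak{l} \neq \mathfrak{p}$ is the non-vanishing of the leading coefficient, which also explains why $\mathfrak{p}$ itself is excluded (there the leading coefficient reduces to $0$, so one only gets stable, not good, reduction — consistent with the reduction-rank-$1$ behavior reflected in the shape of $\rho_{|\op{I}_{\mathfrak{p}}}$ in Theorem \ref{main thm}). The prime $\infty$ is excluded because $v_\infty(f(T)) = -d < 0$, so $f(T) \notin R_\infty$ and the module does not even have integral coefficients there without rescaling; that case is handled separately later in the paper. If desired, one could additionally remark that the reduction $\bar\psi_T$ is visibly supersingular or ordinary depending on $\overline{\partial \psi_T} = \bar T$, but this is not needed for the statement.
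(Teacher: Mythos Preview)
Your proof is correct and follows exactly the same approach as the paper: observe that the coefficients of $\phi_T$ lie in $R_{\mathfrak{l}}$ and that the leading coefficient $f(T)$ is a unit there since $\mathfrak{l}\neq\mathfrak{p}$, so the reduction is a Drinfeld module of rank $r$. Your write-up simply spells out the details (and the side remarks about $\mathfrak{p}$ and $\infty$) more explicitly than the paper does.
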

\begin{proof}
    Let $v\in \Omega_F$ be the valuation associated with $\mathfrak{l}$ and $\phi_v$ the completion of $\phi$ at $v$. I find that $\phi_{v}$ has all of its coeffcients in $R_v$ and the leading coefficient $f(T)$ is a unit in $R_v$. Therefore the reduction $\bar{\phi}_v$ is a Drinfeld module of rank $r$. This shows that $\phi$ has good reduction at $v$.
\end{proof}
  Let $\rho$ be the Galois representation
\[\rho=\widehat{\rho}_{\phi, \p}: \op{G}_F\rightarrow \op{GL}_r(\cO). \]
\begin{proposition}\label{prop 4.1}
 The Galois representation $\rho$ is unramified at all primes $\mathfrak{l}\notin \{\p, \infty\}$. 
\end{proposition}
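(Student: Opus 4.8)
The plan is to combine the good reduction established in Lemma~\ref{good reduction lemma} with the N\'eron--Ogg--Shafarevich criterion for Drinfeld modules: if $\phi$ has good reduction at a place $v$ and $\p$ is prime to the residue $A$-characteristic at $v$, then the $\p$-adic Tate module $T_\p(\phi)$ is unramified at $v$. Since every prime $\mathfrak{l}\notin\{\p,\infty\}$ is by definition distinct from $\p$, this applies to all the places in question, and the proposition follows at once. So the real content is to justify this criterion in the present setting.

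I would proceed as follows. Fix $\mathfrak{l}\notin\{\p,\infty\}$, let $v=v_{\mathfrak{l}}$, and let $\bar\phi_v$ over $\kappa_v$ be the reduction, which by Lemma~\ref{good reduction lemma} is a Drinfeld module of rank $r$; note $\op{char}_A(\kappa_v)=\mathfrak{l}$ and let $\gamma\colon A\to\kappa_v$ be its structure map. Because $\mathfrak{l}\neq\p$, the element $f_\p$ is a unit of $R_v$, so for every $n\geq 1$ the twisted polynomial $\phi_{f_\p^n}$ has coefficients in $R_v$ with unit leading coefficient, and $\partial(\bar\phi_{f_\p^n})=\gamma(f_\p^n)=(f_\p\bmod\mathfrak{l})^n\neq 0$ in $\kappa_v$. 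Hence, after dividing by its leading unit, $\phi_{f_\p^n}(x)$ is monic in $R_v[x]$ and its reduction modulo $\mathcal{M}_v$ is separable of degree $q^{rnd}$. It follows that all roots of $\phi_{f_\p^n}(x)$ are integral over $R_v$ and that reduction modulo $\mathcal{M}_v$ gives an injection $\phi[f_\p^n]\hookrightarrow\bar\phi_v[f_\p^n]$; since both sides have cardinality $q^{rnd}$ (both being free of rank $r$ over $A/f_\p^n$, as $\mathfrak{l}\neq\p$), this injection is an $A$-linear bijection, equivariant for the surjection $\op{G}_{F_v}\twoheadrightarrow\op{G}_{\kappa_v}$. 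Therefore the inertia subgroup $\op{I}_{\mathfrak{l}}=\op{Gal}(\bar F_v/F_v^{\ur})$, acting trivially on $\bar\kappa_v$ and hence on $\bar\phi_v[f_\p^n]$, acts trivially on $\phi[f_\p^n]$ for all $n$; passing to the inverse limit, $\op{I}_{\mathfrak{l}}$ acts trivially on $T_\p(\phi)$, so $\rho=\widehat{\rho}_{\phi,\p}$ is unramified at $\mathfrak{l}$. As $\mathfrak{l}\notin\{\p,\infty\}$ was arbitrary, this proves the claim.

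The only step requiring genuine care — and the one I would either spell out as above or attribute to the reduction theory of Drinfeld modules in \cite{papibook} — is that reduction is a bijection on $f_\p^n$-torsion; this uses good reduction (to make the leading coefficient of $\phi_{f_\p^n}$ a unit, so that the $q^{rnd}$ torsion points are all integral) together with $\mathfrak{l}\neq\p$ (to keep $\gamma(f_\p^n)\neq 0$, so that the reduced torsion polynomial is separable and the reduction map has no collisions). This is the exact function-field analogue of the classical statement that the $\ell$-power torsion of an abelian variety with good reduction at $v\nmid\ell$ specializes isomorphically, and I do not anticipate any real obstacle beyond bookkeeping.
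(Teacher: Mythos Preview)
Your proof is correct and follows essentially the same route as the paper: good reduction at $\mathfrak{l}$ (Lemma~\ref{good reduction lemma}) together with $\mathfrak{l}\neq\p$ forces the reduced $f_\p^n$-torsion polynomial to be separable, whence the $\p$-power torsion is unramified. The paper phrases the last step via Hensel's lemma (roots lift to $F_v^{\ur}$) rather than your reduction-is-bijection counting argument, and only writes it out for $a=f_\p$, but the content is the same.
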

\begin{proof}
    Let $a\in A$ be a monic generator of $\p$. I show that $\phi[a]$ is unramified at all primes $\mathfrak{l}\neq \p$. Let $\mathfrak{l}\neq \p$, and $v=v_{\mathfrak{l}}\in \Omega_F$ be the associated valuation. By Lemma \ref{good reduction lemma}, $\phi$ has good reduction at $\mathfrak{l}$. Therefore $\phi_{v}$ is isomorphic to a Drinfeld module $\psi$ over $R_v$, such that $\bar{\psi}$ is a Drinfeld module of rank $r$. Since $\bar{\psi}_a'(x)=a\neq 0$, it follows that $\bar{\psi}_a'(x)$ has distinct roots. Hensel's lemma implies that all roots of $\psi_a(x)$ lie in $\bar{\F}_q\cdot F_v$, the maximal unramified extension of $F_v$. Therefore, $\psi[a]$ is unramified. Since $\phi_v[a]\simeq \psi[a]$ as $\op{G}_{F_v}$-modules, it follows that $\phi_v[a]$ is unramified. In other words, $\phi[a]$ is unramified at $v$. 
\end{proof}

\subsection{The endomorphism ring}
Let $\kappa$ be a finite extension of $\F_q$, which is a quotient of $A$ by some non-zero prime ideal $\mathfrak{l}$. Let $C$ be the Carlitz module over $\kappa$. This is the Drinfeld module $C:A\rightarrow \kappa\{\tau\}$ defined by $C_T=t+\tau$, where $t$ is the image of $T$ of the quotient map $A\rightarrow A/\mathfrak{l}=\kappa$. 
\begin{proposition}\label{CarlitzProp}
    With respect to notation above, the structure map
    \[\op{str}_{C}: A\rightarrow \op{End}_{\bar{\kappa}}(C)\] is an isomorphism of $\F_q$-algebras.
\end{proposition}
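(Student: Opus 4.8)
The plan is to prove surjectivity of $\op{str}_C$ by induction on the $\tau$-degree, the key observation being that the leading coefficient of any endomorphism of the Carlitz module is forced to lie in $\F_q$. Injectivity of $\op{str}_C$ is free: since $C$ has rank one, $\op{deg}_\tau(C_a)=\op{deg}_T(a)$, so $C_a=0$ forces $a=0$ (this is already noted in general for $\op{str}_\phi$ in the excerpt).

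First I would record the leading-coefficient constraint. Let $u=\sum_{i=0}^n c_i\tau^i\in\bar{\kappa}\{\tau\}$ with $c_n\neq 0$ satisfy $uC_T=C_Tu$, where $C_T=t+\tau$. Comparing the coefficient of $\tau^{n+1}$ on the two sides of $uC_T=C_Tu$: on the left it comes only from the product $u\cdot\tau=\sum_i c_i\tau^{i+1}$ and equals $c_n$, while on the right it comes only from $\tau\cdot u=\sum_i c_i^q\tau^{i+1}$ and equals $c_n^q$. Hence $c_n=c_n^q$, i.e. $c_n\in\F_q$. The same computation applied to a twisted polynomial $u=c_0$ of degree zero, comparing coefficients of $\tau$, gives $c_0\in\F_q$; this settles the base case $n=0$, where $u=C_{c_0}$.

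Next I would run the induction on $n=\op{deg}_\tau u$. For $n\geq 1$, note that $C_{T^n}=(C_T)^n=(t+\tau)^n$ has $\tau$-degree exactly $n$ with leading coefficient $1$, since multiplication of twisted polynomials multiplies top terms and there is no cancellation in the top coefficient. Because $c_n\in\F_q$, the element $C_{c_nT^n}=c_n(t+\tau)^n$ is again an endomorphism of $C$, with leading term $c_n\tau^n$. Therefore $u-C_{c_nT^n}$ is an endomorphism of strictly smaller $\tau$-degree (or is zero), so by the inductive hypothesis it equals $C_b$ for some $b\in A$, whence $u=C_{c_nT^n+b}$ lies in the image of $\op{str}_C$. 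This completes the induction and shows $\op{str}_C$ is onto, hence an isomorphism.

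The argument is elementary and I do not expect a genuine obstacle; the only point meriting a line of justification is that $C_{T^n}$ has $\tau$-degree $n$ and leading coefficient $1$, which is immediate from the rank-one condition $\op{deg}_\tau(C_T)=1$ and the multiplicativity of leading terms in $\bar{\kappa}\{\tau\}$. If one prefers a more structural route, one can instead invoke that $\op{End}_{\bar{\kappa}}(C)$ is a finitely generated projective $A$-module of rank at most $r^2=1$ (it acts faithfully on the $\mathfrak{l}'$-adic Tate module of $C$ for any non-zero prime $\mathfrak{l}'\neq\mathfrak{l}$), hence is an $A$-order in $F$, hence equals $A$ because $A$ is integrally closed in $F$; but the inductive argument above is shorter and self-contained, so that is the one I would write up.
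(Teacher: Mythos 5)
Your proof is correct and follows essentially the same route as the paper: injectivity from the degree formula, and surjectivity by induction on $\op{deg}_\tau(u)$, using the comparison of top coefficients in $uC_T=C_Tu$ to force the leading coefficient into $\F_q$ and then subtracting $C_{c_nT^n}=c_n(t+\tau)^n$ to lower the degree. No issues to report.
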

\begin{proof}
    First, I show that $\op{str}_C$ is injective. Let $a\in A$ be in the kernel of $\op{str}_C$. Then, since $\op{deg}_\tau(C_a)=\op{deg}_T(a)$, it follows that $\op{deg}_T(a)=0$, i.e., $a\in \F_q$. On the other hand, $C:A\rightarrow \kappa\{\tau\}$ is a map of $\F_q$-algebras, hence, $a=0$. Next, I show that $\op{str}_C$ is surjective. Let $u=u_0+u_1\tau+\dots+u_m\tau^m\in \bar{\kappa}\{\tau\}$ be an element in $\op{End}_{\bar{\kappa}}(\phi)$. I show that for some $b\in A$, one has that $u=C_b$. Assume that $u_m\neq 0$, i.e., $m=\op{deg}_\tau(u)$. I prove the result by induction on $m$. First, when $m=0$, $u=u_0$, i.e., $u\in \kappa$. Then, the relation
    \[u_0 C_T=C_T u_0,\] implies
    \[\begin{split}& u_0(T+\tau)=(T+\tau)u_0, \\ 
    \text{i.e.,} & u_0 T+u_0 \tau=u_0 T+u_0^q \tau.\end{split}\]
    This implies that $u_0=u_0^q$, i.e. $u=u_0\in \F_q$. Since $C$ is a homomorphism of $\F_q$-algebras, I find that $u=C_{u_0}$. Thus, one may assume that the assertion holds for all $v\in \op{End}_k(C)$ such that $\op{deg}_\tau(v)<m$. Comparing the highest degree terms for the relation 
    \[uC_T=C_T u\] I have that
    \[u_m \tau^{m+1}+\text{ lower degree terms }=\tau u_m\tau^m +\text{ lower degree terms }.\] Thus, I find that 
    \[u_m \tau^{m+1}=\tau u_m \tau^m=u_m^q \tau^{m+1}.\] Therefore, one finds that $u_m\in \F_q$. I deduce that 
    \[v:=u-C_{u_m T^m}=u-u_m(T+\tau)^m\] satisfies the property that $\op{deg}_{\tau}(v)<m$. By inductive hypothesis, $v$ is in the image of $\op{str}_C$, and hence, $u$ is in the image of $\op{str}_C$.
\end{proof}

Let $K$ be the completion of $F$ at a prime $\p$ of $A$. Let $\mathbb{C}_K$ be the completion of the algebraic closure of $K$. 
\begin{definition}
    Let $\phi$ be a Drinfeld module over $K$, an $A$-lattice of rank $d$ is a free $A$-module of rank $d$ contained in $^{\phi}K^{\op{sep}}$ which is $\op{G}_K$-stable and discrete in $\mathbb{C}_K$.
\end{definition}

\begin{theorem}[Drinfeld-Tate Uniformization]\label{DF uniformization}
     Let $r_1>0, r_2> 0$ be integers and $r:=r_1+r_2$. There is a natural bijection
     \[\begin{split}\{(\phi, \Lambda)\mid & \phi \text{ is a Drinfeld module over }R\text{ with good reduction and of rank }r_1, \\
     & \text{ and }\Lambda\text{ is a }\phi\text{ lattice of rank }r_2\}\\
     \xrightarrow{\sim}\{\psi \mid & \psi\text{ is a Drinfeld module of rank }r\text{ over }R\text{ with stable reduction} \\
     & \text{ and reduction rank }r_1\} \end{split}\]
     and, $\bar{\phi}$ is equal to $\bar{\psi}$. The pair $(\phi, \Lambda)$ is referred to as the uniformization of $\psi$. 
\end{theorem}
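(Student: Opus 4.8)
The plan is to prove the bijection by passing, over the complete algebraically closed field $\mathbb{C}_K$, between a Drinfeld module and the exponential-type function attached to its period lattice, and then descending to $R$ by means of Newton-polygon estimates. I would set up the two maps and then check they are mutually inverse and natural.

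\textbf{From lattice data to a Drinfeld module.} Given $(\phi,\Lambda)$ with $\phi$ of rank $r_1$ and good reduction over $R$ and $\Lambda$ a $\phi$-lattice of rank $r_2$, I would form
\[e_\Lambda(z):=z\prod_{0\neq\lambda\in\Lambda}\left(1-\frac{z}{\lambda}\right).\]
Discreteness of $\Lambda$ ensures that only finitely many $\lambda$ lie in any bounded region, so the product converges locally uniformly and $e_\Lambda\colon\mathbb{C}_K\to\mathbb{C}_K$ is a surjective $\F_q$-linear entire function with kernel exactly $\Lambda$. Since $\phi_a(\Lambda)\subseteq\Lambda$ for all $a\in A$, there is a unique twisted polynomial $\psi_a$ with $\psi_a\circ e_\Lambda=e_\Lambda\circ\phi_a$; then $a\mapsto\psi_a$ is a ring homomorphism, and counting the $a$-torsion points, equivalently computing the index $[\Lambda:\phi_a\Lambda]$, gives $\op{deg}_\tau(\psi_T)=r_1+r_2=r$, so $\psi$ is a Drinfeld module of rank $r$ over $\mathbb{C}_K$. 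Galois-stability of $\Lambda$ makes $e_\Lambda$ commute with $\op{G}_K$, so the coefficients of $\psi_a$ are Galois-invariant, hence in $K$; a Newton-polygon analysis of $\psi_a\circ e_\Lambda=e_\Lambda\circ\phi_a$, using that $\phi_a$ is integral with unit leading coefficient and that the valuations of the $\lambda\in\Lambda\setminus\{0\}$ are bounded above, then shows $\psi$ has coefficients in $R$ and that $\bar\psi=\bar\phi$, so $\psi$ has stable reduction of reduction rank $r_1$.

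\textbf{From a Drinfeld module to lattice data.} Conversely, given $\psi$ over $R$ with stable reduction of reduction rank $r_1$, I would first replace $\psi$ by an $R$-isomorphic model with $\bar\psi$ of rank $r_1$, so that in $\psi_T=\sum_i c_i\tau^i$ the coefficient $c_{r_1}$ is a unit while $c_i$ has positive valuation for $i>r_1$. Then I would solve the functional equation $e\circ\phi_T=\psi_T\circ e$ recursively for an $\F_q$-linear series $e(z)=\sum_{j\geq0}e_jz^{q^j}$ normalized by $e_0=1$ together with a twisted polynomial $\phi_T$ of degree $r_1$: comparing coefficients yields a triangular system whose leading behaviour is governed by the good-reduction part of $\psi$ and which has a unique solution over $\mathbb{C}_K$. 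The substantive content is to show that $e$ converges on all of $\mathbb{C}_K$, that $a\mapsto\phi_a$ extends to a Drinfeld module of rank $r_1$ with good reduction over $R$, and that $\Lambda:=\ker e$ is a free $A$-module of rank exactly $r_2$ that is discrete in $\mathbb{C}_K$ and stable under $\op{G}_K$, the rank and discreteness being read off from the $r_2$ positive slopes of the Newton polygon of $e$.

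\textbf{Mutual inverseness and the main difficulty.} Finally I would check the two constructions are mutually inverse: starting from $(\phi,\Lambda)$, forming $\psi$, and then re-solving the normalized functional equation returns $\phi$ and $\ker e_\Lambda=\Lambda$ by the uniqueness built into the recursion, while the reverse composition is symmetric; naturality (compatibility with base change, with morphisms of Drinfeld modules, and with reduction, so that $\bar\phi=\bar\psi$) is immediate from the constructions. The hard part is the second construction — manufacturing $\phi$ and the uniformizing exponential $e$ from $\psi$ and, above all, controlling the coefficients $e_j$ sharply enough to guarantee global convergence on $\mathbb{C}_K$, good reduction of $\phi$, and that $\ker e$ is a discrete lattice of the correct rank $r_2$; this rests on delicate valuation and Newton-polygon estimates for twisted power series. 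A secondary technical point, in both directions, is verifying integrality of coefficients, i.e. membership in $R$ rather than merely in $K$.
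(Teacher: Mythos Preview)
The paper does not actually prove this theorem; it simply cites it as a known result, pointing to Drinfeld's original paper and to Papikian's textbook. Your sketch is essentially the standard argument that appears in those references and is correct in outline: one direction builds $\psi$ from $(\phi,\Lambda)$ via the lattice exponential $e_\Lambda$, and the other recovers $(\phi,\Lambda)$ from $\psi$ by recursively solving the functional equation $e\circ\phi_T=\psi_T\circ e$ for a unique normalized pair $(e,\phi_T)$ and setting $\Lambda=\ker e$, with convergence, integrality, and the rank of $\Lambda$ all read off from Newton-polygon estimates. So there is no methodological discrepancy here; you have simply written out what the paper outsources to the literature, and you have correctly identified the genuinely delicate step as the convergence and valuation control in the second construction.
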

\begin{proof}
    For a proof of the above result, see \cite[section 7]{DrinfeldEllipticModules} or \cite[Theorem 6.2.11]{papibook}.
\end{proof}
\begin{proposition}\label{prop 4.3}
    Let $\psi_1$ and $\psi_2$ be Drinfeld modules of rank $r$ over $R$ with stable reduction. Assume that $\op{rank}(\bar{\psi}_1)=\op{rank}(\bar{\psi}_2)$. Let $(\phi_i, \Lambda_i)$ be the Drinfeld-Tate uniformization of $\psi_i$. Then, there is an injection of $A$-modules
    \[\op{Hom}_{\bar{K}}(\psi_1, \psi_2)\hookrightarrow \op{Hom}_{\bar{K}}(\phi_1, \phi_2).\]
\end{proposition}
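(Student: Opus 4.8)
The plan is to descend the twisted polynomial defining $u$ through the analytic uniformization maps of the two Drinfeld modules. Write $K=\Frac(R)$ and let $\mathbb{C}_K$ be the completion of $\bar K$. Underlying the Drinfeld--Tate uniformization of Theorem~\ref{DF uniformization} (see \cite[\S7]{DrinfeldEllipticModules}, \cite[Ch.~6]{papibook}), the pair $(\phi_i,\Lambda_i)$ attached to $\psi_i$ comes with an $\F_q$-linear, surjective, entire exponential map $e_{\Lambda_i}\colon\mathbb{C}_K\to\mathbb{C}_K$ with $\ker e_{\Lambda_i}=\Lambda_i$, normalized so that $\partial e_{\Lambda_i}=1$, intertwining the two $A$-actions:
\[
 e_{\Lambda_i}\circ\phi_{i,a}=\psi_{i,a}\circ e_{\Lambda_i}\qquad(a\in A);
\]
thus $e_{\Lambda_i}$ presents ${}^{\psi_i}\mathbb{C}_K$ as the quotient of ${}^{\phi_i}\mathbb{C}_K$ by the lattice $\Lambda_i$. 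Because $q$ is a power of $p$, the derivative of an $\F_q$-linear power series coincides with its $\tau^0$-coefficient; hence $e_{\Lambda_2}$ is invertible in the ring of twisted power series, with formal inverse (the logarithm) $\log_{\Lambda_2}$ satisfying $\log_{\Lambda_2}\circ e_{\Lambda_2}=\tau^0$.

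Given $u\in\op{Hom}_{\bar K}(\psi_1,\psi_2)$, viewed in $\mathbb{C}_K\{\tau\}$, I would define its image to be the unique $w$ with
\[
 e_{\Lambda_2}\circ w=u\circ e_{\Lambda_1},
\]
namely $w:=\log_{\Lambda_2}\circ u\circ e_{\Lambda_1}$. A priori $w$ is only a twisted power series converging near $0$ (since $\log_{\Lambda_2}$ has positive radius of convergence and $u\circ e_{\Lambda_1}$ vanishes at $0$). The crux of the argument, and its only nonformal ingredient, is that $w$ is in fact an everywhere-convergent twisted polynomial. One continues $w$ to all of $\mathbb{C}_K$ by means of the functional equation $w\circ\phi_{1,a}=\phi_{2,a}\circ w$ (valid for all $a\in A$, since applying $e_{\Lambda_2}$ to either side and unwinding $e_{\Lambda_i}\circ\phi_{i,a}=\psi_{i,a}\circ e_{\Lambda_i}$ and $u\circ\psi_{1,a}=\psi_{2,a}\circ u$ yields the same function), and one bounds $\op{deg}_\tau w$ by comparing the numbers of zeros in a disc of large radius on the two sides of $e_{\Lambda_2}\circ w=u\circ e_{\Lambda_1}$: the right-hand side is a fixed polynomial $u$ applied to $e_{\Lambda_1}$, so its zero set is a finite union of $\Lambda_1$-cosets and grows at most by a bounded factor faster than that of $e_{\Lambda_1}$, which is incompatible with $w$ having infinite $\tau$-degree on the left. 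Concretely, this step is precisely the statement that the Drinfeld--Tate correspondence of Theorem~\ref{DF uniformization} is functorial, under which $\op{Hom}_{\bar K}(\psi_1,\psi_2)$ is identified with the $A$-module $\{w\in\op{Hom}_{\bar K}(\phi_1,\phi_2):w(\Lambda_1)\subseteq\Lambda_2\}$; I would either prove it as just sketched or quote it from \cite[Ch.~6]{papibook}.

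Granting that $w\in\mathbb{C}_K\{\tau\}$, the rest is bookkeeping. The displayed functional equation gives $w\circ\phi_{1,a}=\phi_{2,a}\circ w$ for all $a$, so $w$ is a morphism $\phi_1\to\phi_2$; and since $\Lambda_i$ is $\op{G}_K$-stable one has $e_{\Lambda_i}^{\sigma}=e_{\Lambda_i}$ for $\sigma\in\op{G}_K$, so applying any $\sigma$ fixing the finitely many coefficients of $u$ to $e_{\Lambda_2}\circ w=u\circ e_{\Lambda_1}$ and invoking uniqueness of $w$ shows $w^{\sigma}=w$; hence $w$ has coefficients in a finite extension of $K$, i.e. $w\in\op{Hom}_{\bar K}(\phi_1,\phi_2)$. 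The resulting map $\Phi\colon u\mapsto w$ is $A$-linear: additivity is immediate from uniqueness of $w$ and additivity of $e_{\Lambda_2}$, while for $a\in A$ one has $(a\cdot u)\circ e_{\Lambda_1}=\psi_{2,a}\circ u\circ e_{\Lambda_1}=\psi_{2,a}\circ e_{\Lambda_2}\circ w=e_{\Lambda_2}\circ(\phi_{2,a}\circ w)$, so $\Phi(a\cdot u)=\phi_{2,a}\circ w=a\cdot\Phi(u)$. Finally $\Phi$ is injective: if $\Phi(u)=0$ then $u\circ e_{\Lambda_1}=0$ as a function on $\mathbb{C}_K$, and surjectivity of $e_{\Lambda_1}$ forces the polynomial $u(x)$ to vanish identically, whence $u=0$. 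This yields the injection $\op{Hom}_{\bar K}(\psi_1,\psi_2)\hookrightarrow\op{Hom}_{\bar K}(\phi_1,\phi_2)$.

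The step I expect to be the main obstacle is the one flagged above: proving that the formal twisted power series $\log_{\Lambda_2}\circ u\circ e_{\Lambda_1}$ converges everywhere and has finite $\tau$-degree — equivalently, that a morphism between the stable-reduction Drinfeld modules $\psi_i$ genuinely lifts to a morphism of their good-reduction parts $\phi_i$ carrying $\Lambda_1$ into $\Lambda_2$. This is the analytic content of Drinfeld--Tate uniformization lying just beyond the bijection on objects recorded in Theorem~\ref{DF uniformization}; once it is available, the construction of $\Phi$ and the verification of its properties are routine.
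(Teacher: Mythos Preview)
Your proposal is correct; the paper's own proof is a one-line citation to \cite[Theorem 6.2.12]{papibook}, which is precisely the functoriality of Drinfeld--Tate uniformization that you reconstruct. Your sketch of the analytic argument (defining $w=\log_{\Lambda_2}\circ u\circ e_{\Lambda_1}$, verifying the intertwining relation, and deducing $A$-linearity and injectivity from surjectivity of $e_{\Lambda_1}$) is the standard route to that theorem, and you correctly flag the one nontrivial step---that $w$ is an honest twisted polynomial rather than merely a formal series---as the place where real work (or a citation) is needed. Since you explicitly offer quoting \cite[Ch.~6]{papibook} as an alternative for that step, your approach and the paper's coincide.
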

\begin{proof}
    The result follows from \cite[Theorem 6.2.12]{papibook}.
\end{proof} 

\begin{corollary}\label{strK iso}
    Let $r\geq 2$ be an integer, $\p$ be a non-zero prime ideal of $A$ and $\cO:=A_{\p}$. Let $f(T)$ be the monic polynomial generator of $\p$ and $\phi$ be the Drinfeld module of rank $r$ with coefficients in $F$ defined by 
\[\phi_T:=T+\tau+f(T)\tau^r.\] Then, the structure map 
\[\op{str}_\phi: A\rightarrow \op{End}_{\bar{F}}(\phi)\] is an isomorphism.
\end{corollary}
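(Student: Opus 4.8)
The plan is to prove that $\op{str}_\phi$ is surjective (it is already injective because $\op{deg}_\tau \phi_a = r\op{deg}_T a$), by passing to the completion at $\p$, applying the Drinfeld--Tate uniformization, and transporting the endomorphism computation to the Carlitz module via Proposition \ref{CarlitzProp}. First I would identify the reduction type of $\phi$ at $\p$. Set $R:=A_\p$, with residue field $\kappa\cong A/\p$, and let $t$ be the image of $T$ in $\kappa$. The twisted polynomial $\phi_T=T+\tau+f(T)\tau^r$ already has all of its coefficients in $R$, its coefficient of $\tau$ is a unit, and $f(T)$---the monic generator of $\p$---reduces to $0$ in $\kappa$. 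Hence $\phi_\p$ has stable reduction at $\p$, with reduction given by $\bar\phi_{\p,T}=t+\tau$; that is, $\bar\phi_\p$ is the Carlitz module $C$ over $\kappa$, which has rank $1$. So the reduction rank of $\phi$ at $\p$ is $r_1=1$, and $r_2:=r-1\geq 1$ since $r\geq 2$. Applying Theorem \ref{DF uniformization} to $\psi:=\phi_\p$ then produces a uniformization $(\phi^0,\Lambda)$ in which $\phi^0$ is a rank-$1$ Drinfeld module over $R$ with good reduction, $\Lambda$ is a $\phi^0$-lattice of rank $r-1$, and $\bar{\phi^0}=\bar\phi_\p=C$.

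The key step is then a chain of injective $A$-algebra homomorphisms. Fixing an embedding $\bar{F}\hookrightarrow \bar{F}_\p$ above $v_\p$ gives, by base change, an injection $\op{End}_{\bar{F}}(\phi)\hookrightarrow \op{End}_{\bar{F}_\p}(\phi_\p)$. Proposition \ref{prop 4.3}, applied with $\psi_1=\psi_2=\phi_\p$, gives an injection $\op{End}_{\bar{F}_\p}(\phi_\p)\hookrightarrow \op{End}_{\bar{F}_\p}(\phi^0)$, which by functoriality of the uniformization is a ring homomorphism sending the identity to the identity. Since $\phi^0$ has good reduction, reduction of endomorphisms is injective, yielding $\op{End}_{\bar{F}_\p}(\phi^0)\hookrightarrow \op{End}_{\bar{\kappa}}(\bar{\phi^0})=\op{End}_{\bar{\kappa}}(C)$, and Proposition \ref{CarlitzProp} identifies the target with $A$ through $\op{str}_C^{-1}$. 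Composing, I obtain an injective $A$-algebra homomorphism $h\colon \op{End}_{\bar{F}}(\phi)\hookrightarrow A$. The composite $h\circ \op{str}_\phi$ is an $A$-algebra endomorphism of $A$, hence the identity; therefore $h$ is surjective, hence an isomorphism, and $\op{str}_\phi=h^{-1}$ is an isomorphism as well (so in fact $\op{End}_{\bar{F}}(\phi)=A$).

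The only inputs that are not purely formal are two standard facts that should be cited explicitly: that the correspondence of Proposition \ref{prop 4.3} is compatible with composition, so that on endomorphism rings it is a homomorphism of $A$-algebras and not merely of $A$-modules (this comes out of the construction behind \cite[Theorem 6.2.12]{papibook}); and that reduction is injective on $\bar{F}_\p$-endomorphisms of a Drinfeld module with good reduction, which one sees on $\ell$-adic Tate modules for $\ell\neq \p$. I expect the first of these to be the point requiring the most care when writing the argument down; alternatively, the reduction step can be avoided altogether by invoking the classical fact that a rank-$1$ Drinfeld module has endomorphism ring $A$.
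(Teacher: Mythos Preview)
Your argument is correct and is essentially the same as the paper's: both pass to the completion at $\p$, identify the stable reduction as the Carlitz module, apply the Drinfeld--Tate uniformization together with Proposition \ref{prop 4.3}, and then use Proposition \ref{CarlitzProp} to pin down the endomorphism ring; the paper phrases the conclusion via the factorization $\op{str}_C=\gamma\beta\alpha$ with $\gamma\beta$ injective, which is just the dual of your $h\circ\op{str}_\phi=\op{id}_A$. For the two facts you flag, the paper cites \cite[Theorem 6.2.12]{papibook} for the uniformization step and \cite[Corollary 6.1.12]{papibook} for injectivity of reduction on endomorphisms.
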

\begin{proof}
    At the prime $\p$, the Drinfeld module has stable reduction with reduction rank $1$. Let $\kappa$ denote the residue field of $A$ at $\p$. Take $\psi$ to be the completion of $\phi$ at $\p$. I find that $\psi$ is defined over $\cO$ and that its reduction mod-$\p$ is equal to $C$ (the Carlitz module over $\kappa$). Set $K$ to denote the fraction field of $\cO$. Let $(\psi', \Lambda)$ be the Drinfeld-Tate uniformization of $\psi$. It follows from the Theorem \ref{DF uniformization} that $\bar{\psi}'=\bar{\psi}$, and hence, $\bar{\psi}'=C$. It follows from Proposition \ref{CarlitzProp} that the structure map \[\op{str}_{C}: A\rightarrow \op{End}_{\bar{\kappa}}(C)\] is an isomorphism of $A$-modules. This structure map factors as a composite of natural maps
    \[A\xrightarrow{\alpha} \op{End}_{\bar{K}}(\psi)\xrightarrow{\beta} \op{End}_{\bar{K}}(\psi')\xrightarrow{\gamma} \op{End}_{\bar{\kappa}}(C).\] The map $\alpha$ is the structure map $\op{str}_{\psi, K}$. The map $\beta$ is injective by Proposition \ref{prop 4.3}. The map $\gamma$ is the reduction map and is injective by \cite[Corollary 6.1.12]{papibook}. Since $\gamma \beta\alpha=\op{str}_C$ is surjective and $\gamma \beta$ is injective, it follows that $\alpha$ is surjective. On the other hand, since $\gamma \beta\alpha=\op{str}_C$ is injective, it follows that $\alpha$ is injective. Hence the map $\op{str}_\psi=\alpha:A\rightarrow \op{End}_{\bar{K}}(\psi)$ is an isomorphism. This map factors as a composite
    \[A\rightarrow  \op{End}_{\bar{F}}(\phi)\rightarrow \op{End}_{\bar{K}}(\phi),\] the latter map is injective. It follows that $\op{str}_\phi$ is an isomorphism of $A$-modules. 
\end{proof}

\subsection{Ramification at $\p$}
\par In this subsection, I analyze the reduction type of $\phi$ at $\p$ and the image of the local Galois representation at $\p$. 
\begin{proposition}\label{prop 4.4}
    Let $r\geq 2$ be an integer, $\p$ be a non-zero prime ideal of $A$ and $\cO:=A_{\p}$. Let $f(T)$ be the monic polynomial generator of $\p$ and $\phi$ be the Drinfeld module of rank $r$ with coefficients in $F$ defined by 
\[\phi_T:=T+\tau+f(T)\tau^r.\] The following assertions hold
\begin{enumerate}
    \item $\op{End}_{\bar{F}}(\phi)=A$,
    \item the image of $\rho$ has finite index in $\op{GL}_r(\cO)$,
    \item  the restriction of $\rho$ to the inertia group $\op{I}_{\p}$ is of the form
        \[\rho_{|\op{I}_{\p}}=\begin{pmatrix}
  \chi & \ast & \ast & \cdots & \ast & \ast \\
   & 1 & 0 & \cdots & 0 & 0\\
    & & 1 & 0 & \cdots & 0 \\
    & &  & \ddots & \vdots & \vdots \\  
    & &  &    & 1 & 0 \\
    & &  &    &  & 1 &   \\
   \end{pmatrix},\] where $\chi$ is a totally ramified character.
\end{enumerate}
\end{proposition}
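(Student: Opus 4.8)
The plan is to dispose of parts (1) and (2) at once and then carry out a local analysis at $\p$ for part (3). Part (1) is precisely Corollary~\ref{strK iso}, so nothing new is needed; part (2) then follows immediately from the Pink--R\"utsche open image theorem (Theorem~\ref{open image thm}), since $\op{End}_{\bar F}(\phi)=A$ forces $[\op{GL}_r(\cO):\op{im}\rho]$ to be finite.

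For part (3), set $K:=F_\p$, write $\op{G}_K=\op{Gal}(\bar K/K)$, and recall from the proof of Corollary~\ref{strK iso} that $\psi:=\phi_\p$ already has coefficients in $\cO=A_\p$ and reduces modulo $\p$ to the Carlitz module $C$ over $\kappa=A/\p$; thus $\psi$ has stable reduction at $\p$ of reduction rank $r_1=1$. Let $(\psi',\Lambda)$ be the Drinfeld--Tate uniformization of $\psi$ (Theorem~\ref{DF uniformization}): $\psi'$ is a rank-$1$ Drinfeld module over $\cO$ with good reduction and $\bar{\psi'}=C$, while $\Lambda$ is a $\psi'$-lattice of rank $r_2=r-1$. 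Applying the snake lemma to multiplication by $f^{n}$ on the uniformization sequence $0\to\Lambda\to{}^{\psi'}\mathbb{C}_K\to{}^{\psi}\mathbb{C}_K\to 0$, using that $\Lambda$ is $A$-free and that $\psi'_{f^{n}}$ and $\psi_{f^{n}}$ are surjective on $\mathbb{C}_K$, yields $0\to\psi'[f^{n}]\to\psi[f^{n}]\to\Lambda/f^{n}\Lambda\to 0$; taking the inverse limit over $n$ gives a short exact sequence of $\op{G}_K$-modules
\[0\longrightarrow T_\p(\psi')\longrightarrow T_\p(\psi)\longrightarrow \Lambda\otimes_A\cO\longrightarrow 0,\]
with $T_\p(\psi')\cong\cO$ and $\Lambda\otimes_A\cO\cong\cO^{r-1}$. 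In a basis adapted to this filtration, $\rho|_{\op{G}_K}$ becomes block upper-triangular, with a character $\chi\colon\op{G}_K\to\cO^{\times}$ (the action on $T_\p(\psi')$) in the top-left corner and the action $\rho_\Lambda$ on $\Lambda\otimes_A\cO$ in the bottom-right block.

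It then suffices to check two things. First, $\op{I}_\p$ acts trivially on $\Lambda\otimes_A\cO$: the $\op{G}_K$-action on a uniformizing lattice is unramified --- this is the Drinfeld-module analogue of the fact that the toric part of a semistable degeneration has unramified character group, equivalently $\Lambda\subset{}^{\psi'}K^{\ur}$ (theory of Tate uniformization; cf.\ \cite[Ch.~6]{papibook}) --- so the bottom-right block equals $\op{Id}_{r-1}$ on $\op{I}_\p$. Second, $\chi|_{\op{I}_\p}$ is totally ramified: since $\bar{\psi'}=C$ and $\partial C_f=\gamma(f)=0$ we have $\op{ht}_\tau C_f\ge 1$, and as $C[\p]$ is an $A/\p$-vector space of order $q^{d-\op{ht}_\tau C_f}$ --- a power of $q^{d}=|A/\p|$ only if $\op{ht}_\tau C_f=d$ --- we get $C_f=(\text{unit})\,\tau^{d}$ and $C[\p]=0$ over $\bar\kappa$. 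Hence $\psi'[\p^{\infty}]$ is a connected $\p$-divisible module over $\cO$ and $\psi'$ is, up to an unramified twist, the Carlitz module over $\cO$, whose $\p$-power torsion generates a totally ramified extension of $K$ (the function-field cyclotomic theory; alternatively, a Newton-polygon computation shows $\psi'_{f}(x)/x$ has Newton polygon the single segment from $(0,1)$ to $(q^{d}-1,0)$, so $K(\psi'[\p])/K$ is totally ramified of degree $q^{d}-1$, and inductively so is $K(T_\p(\psi'))/K$). Thus $\chi$ and $\chi|_{\op{I}_\p}$ have the same image. Together with Proposition~\ref{prop 4.1}, this gives the displayed shape of $\rho|_{\op{I}_\p}$.

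I expect the main obstacle to be the first of these two points: establishing (or cleanly citing) that the uniformizing lattice $\Lambda$ carries an unramified Galois action. The uniformization exact sequence and the Carlitz/Lubin--Tate computation behind the second point are standard; the Galois action on $\Lambda$ is the one place where, absent a direct reference, one would have to work through the construction of $\Lambda$ from the Newton polygon of $\phi_T$ at $\p$.
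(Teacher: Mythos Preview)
Your argument is correct, and for parts (1) and (2) it is literally the paper's proof. For part (3) the two routes are closely related but not identical: the paper quotes the filtration
\[0\to T_\p(\phi)^0\to T_\p(\phi)\to T_\p(\bar{\phi})\to 0\]
directly from \cite[\S6.3]{papibook}, where $T_\p(\phi)^0$ is the ``formal'' submodule (roots of small absolute value) and the totally ramified/unramified dichotomy is packaged as a black box. You instead derive the filtration by hand from the Drinfeld--Tate uniformization, obtaining $0\to T_\p(\psi')\to T_\p(\psi)\to\Lambda\otimes_A\cO\to 0$; this is essentially the proof of the result the paper cites, so you are unpacking one level deeper rather than taking a genuinely different path. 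Your self-identified gap---the unramifiedness of the $\op{G}_K$-action on $\Lambda$---is exactly the content covered in \cite[\S6.2--6.3]{papibook}, so a citation there closes it. Two small remarks: the reference to Proposition~\ref{prop 4.1} at the end is unnecessary, since the shape of $\rho|_{\op{I}_\p}$ is a purely local statement at $\p$; and the phrase ``up to an unramified twist, the Carlitz module over $\cO$'' is a bit loose---your Newton-polygon alternative is the cleaner justification that $K(\psi'[\p^n])/K$ is totally ramified.
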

\begin{proof}
    Corollary \ref{strK iso} asserts that \[\op{str}_{\phi}: A\rightarrow \op{End}_{\bar{F}}(\phi)\] is an isomorphism. That the image of $\rho$ has finite index in $\op{GL}_r(\cO)$ follows from Theorem \ref{open image thm}. The reduction $\bar{\phi}$ of $\phi$ at $\p$ is equal to the Carlitz module, and hence $\bar{\phi}$ is of height $1$. It then follows that there is an exact sequence of $A[\op{G}_{F_{\p}}]$-modules
    \[0\rightarrow \phi^0[\p]\rightarrow \phi[\p]\rightarrow \bar{\phi}[\p]\rightarrow 0,\]
    where $\phi^0[\p]:=\{\alpha\in \phi[\p](\bar{F}_{\p})\mid |\alpha|<1\}$. As $A$-modules, \[\phi^0[\p]\simeq (A/\p)^{\op{H}(\bar{\phi})}=(A/\p),\] and 
    \[\bar{\phi}[\p]\simeq (A/\p)^{r-\op{H}(\bar{\phi})}=(A/\p)^{r-1}.\] Moreover, the module $\phi^0[\p]$ (resp. $\bar{\phi}[\p]$) is totally ramified (resp. unramified). For a proof of these statements, the reader may refer to \cite[section 6.3]{papibook}. Taking $T_{\p}(\phi)^0:=\varprojlim_n \phi[\p]^0$, I have a short exact sequence of $A_{\p}[\op{G}_{F_{\p}}]$-modules
    \[0\rightarrow T_{\p}(\phi)^0\rightarrow T_{\p}(\phi)\rightarrow T_{\p}(\bar{\phi})\rightarrow 0,\] where 
    \[T_{\p}(\phi)^0\simeq A_{\p}\text{ and }T_{\p}(\bar{\phi})\simeq A_{\p}^{r-1}.\] The Galois action on $T_{\p}(\bar{\phi})$ is unramified. Thus, I find that the restriction $\rho_{|\op{I}_{\p}}$ is of the desired form.
\end{proof}
\subsection{Ramification at $\infty$}
\par In this subsection, I show that $\rho$ is indeed unramified at $\infty$. This part of the argument does require that I assume that $\deg \p=1$. Recall that $F_\infty$ is the completion of $F$ at $\infty$. Fix an algebraic closure of $\bar{F}_{\infty}/F_\infty$ and think of every algebraic extension of $F_\infty$ as a subfield of $\bar{F}_\infty$. For $k\in \Z_{\geq 0}$, let $L_k/F_\infty$ be the splitting field of the polynomial $g_k(x):=\phi_{f(T)^k}(x)$, with the understanding that $L_0:=F_\infty$. Let $v=v_\infty$ be the valuation at $\infty$ normalized by $v(T)=-1$, and note that that $v(\cdot)=-\op{deg}(\cdot)$. Since it is assumed that $\op{deg}f=1$, one may write $f(T)=(T-c)$, where $c\in \F_q$. 

\begin{lemma}\label{unramified lemma}
    With respect to notation above, one has that for all $k\geq 1$, the extension $L_k/F_\infty$ is unramified. 
\end{lemma}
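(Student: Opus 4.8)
The plan is to build the tower $F_\infty = L_0 \subseteq L_1 \subseteq L_2 \subseteq \cdots$ and prove by induction on $k$ that $L_k/F_\infty$ is unramified. Write $f = f(T) = T-c$, so that $v(f) = v_\infty(T-c) = -1$, i.e.\ $1/f$ is a uniformizer of $F_\infty$; recall that $\phi$ is a ring homomorphism, so $\phi_{f^k} = \phi_f\circ\cdots\circ\phi_f$ ($k$ factors) as an additive polynomial, whence $\phi_f$ carries the root set $\phi[f^k]$ of $g_k$ into $\phi[f^{k-1}]$, and $\phi[f^{k-1}] \subseteq \phi[f^k]$. In particular $L_{k-1} \subseteq L_k$ and $L_k = L_{k-1}\bigl(\phi[f^k]\bigr)$ is the compositum over $x \in \phi[f^k]$ of the fields $L_{k-1}(x)$. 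Since a compositum of unramified extensions is unramified and a tower of two unramified extensions is unramified, it is enough to prove that, \emph{assuming $L_{k-1}/F_\infty$ is unramified, every $x \in \phi[f^k]$ generates an unramified extension of $L_{k-1}$}. The base case $L_0 = F_\infty$ is trivial.

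First I would establish a valuation estimate that does not use the induction: \emph{every element of $\phi[f^k]$ has non-negative valuation} for the (unique) extension of $v$ to $\bar F_\infty$. Indeed $\phi[f^k]$ is finite and stable under $\alpha \mapsto \phi_f(\alpha)$, while if $v(\alpha) = -m < 0$ then, writing $\phi_f(\alpha) = f\alpha + \alpha^q + f\alpha^{q^r}$ and using $v(f) = -1$, the three summands have valuations $-1-m$, $-qm$ and $-1-q^rm$; since $m \geq 1$ and $r \geq 2$, the last is strictly the smallest, so $v(\phi_f(\alpha)) = -1 - q^r m < -m = v(\alpha)$. Iterating $\phi_f$ on such an $\alpha$ would produce infinitely many elements of $\phi[f^k]$ of strictly decreasing valuation, contradicting finiteness.

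For the inductive step, fix $x \in \phi[f^k]$ and put $w := \phi_f(x) \in \phi[f^{k-1}] \subseteq L_{k-1}$; by the preceding paragraph $v(w) \geq 0$. Then $x$ is a root of the monic degree-$q^r$ polynomial
\[
 h_w(X) := \frac{1}{f}\bigl(\phi_f(X) - w\bigr) = X^{q^r} + \frac{1}{f} X^q + X - \frac{1}{f} w .
\]
Since $v(1/f) = 1 > 0$ and $v(w) \geq 0$, all coefficients of $h_w$ lie in the valuation ring of $L_{k-1}$, and because $L_{k-1}/F_\infty$ is unramified the elements $1/f$ and $w/f$ lie in its maximal ideal; hence the reduction of $h_w$ is $\overline{h_w}(X) = X^{q^r} + X$. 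The derivative of $X^{q^r} + X$ is the nonzero constant $1$, so $\overline{h_w}$ is separable, and a monic polynomial over the complete discrete valuation ring of $L_{k-1}$ with separable reduction has all of its roots in unramified extensions (Hensel's lemma). Thus $L_{k-1}(x)/L_{k-1}$ is unramified, which closes the induction.

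The step I expect to require the most care is the valuation bookkeeping of the second paragraph, and it is there that the hypothesis $\op{deg}\p = 1$ is used — through $v(f) = -1$ — to make both the integrality of $\phi[f^k]$ and the clean separable reduction $\overline{h_w} = X^{q^r} + X$ available simultaneously. Two minor points should also be checked: that separable reduction really forces unramifiedness (residue degree equal to the degree, ramification index $1$), and that the passage from ``each $x$ gives an unramified extension of $L_{k-1}$'' to ``$L_k/L_{k-1}$ is unramified'' involves only finitely many fields — here one may note that two preimages of a fixed $w$ under $\phi_f$ differ by an element of $\phi[f] \subseteq L_{k-1}$, so $L_k$ is generated over $L_{k-1}$ by finitely many elements, each of the type treated above.
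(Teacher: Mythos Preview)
Your argument is correct and follows the same strategy as the paper's proof: induct on $k$, and at each step apply Hensel's lemma to the monic polynomial $f^{-1}\bigl(\phi_f(X)-w\bigr)$, whose reduction is the separable $X^{q^r}+X$. Your treatment is in fact more careful than the paper's --- the paper's inductive step asserts that ``$L_k$ is the splitting field of $G(x)$ over $L_{k-1}$,'' which taken literally returns only $L_{k-1}$ for $k\geq 2$; your introduction of $h_w$ and the preliminary bound $v(w)\geq 0$ are precisely what is needed to make the induction go through.

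Two small points to tighten in the second paragraph. First, write $m>0$ rather than $m\geq 1$: the extension of $v$ to $\bar F_\infty$ is $\mathbb{Q}$-valued, so a priori $m$ need not be an integer (the three inequalities you need hold for any $m>0$). Second, the endgame is cleaner phrased as ``the iterates $\phi_f^{\,j}(\alpha)$ all have negative valuation, yet $\phi_f^{\,k}(\alpha)=0$'': the sequence terminates at $0$ after $k$ steps, so ``infinitely many distinct elements'' is not quite the contradiction you reach.
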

\begin{proof}
    I prove the result by induction on $k$, starting with $k=1$. Let $v_1$ be the valuation on $L_1$ which extends $v$. Let \[G(x):=(T-c)^{-1} \phi_{(T-c)}(x)=x+(T-c)^{-1}x^q+x^{q^r}\] and $\bar{G}(x)$ be the reduction of $G(x)$ modulo the maximal ideal of $\cO_\infty=\F_q\llbracket T^{-1}\rrbracket$. It is clear that for all $k\geq 1$, $L_k$ is the splitting field of $G(x)$ over $L_{k-1}$. It is easy to see that $\bar{G}(x)=x+x^{q^r}$ and $\bar{G}'(x)=1$. Thus, $\bar{G}(x)$ has the same number of roots as $G(x)$ and its roots are all distinct. By Hensel's lemma, the roots of $G(x)$ lie in $\bar{\F}_q \cdot F_\infty$, the maximal unramified extension of $F_\infty$. Therefore, in particular, $L_1/F_\infty$ is unramified. I argue by induction and assume that $L_{k-1}/F_\infty$ is unramified. Then, the valuation $v$ is unchanged when extended to $L_{k-1}$. Note that $L_k$ is the splitting field of $G(x)$ over $L_{k-1}$. Thus, the same argument gives us that $L_{k}/L_{k-1}$ is unramified. Thus, $L_k$ is an unramified extension of $F_\infty$, and I conclude that $\rho$ is unramified at $\infty$ when $\op{deg} \p=1$. 
\end{proof}

\begin{remark}
    Alternatively, Lemma \ref{unramified lemma} can be proven via analyzing the Newton polygon of $G(x)$. 
\end{remark}

\begin{proposition}\label{unramified propn}
    Assume that $\deg \p=1$. Then, $\rho$ is unramified at $\infty$. 
\end{proposition}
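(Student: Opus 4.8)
The plan is to obtain the proposition as the inverse limit over $k$ of the statement already established in Lemma~\ref{unramified lemma}. Recall that $\rho = \widehat{\rho}_{\phi,\p}$ is the Galois action on the Tate module $T_\p(\phi) = \varprojlim_k \phi[\p^k]$. Since $\p$ is generated by $f = f_\p$, one has $\p^k = (f(T)^k)$, so $\phi[\p^k] = \phi[f(T)^k]$ is exactly the set of roots of $g_k(x) = \phi_{f(T)^k}(x)$; hence $F_\infty(\phi[\p^k]) = L_k$ in the notation of the previous subsection. Because each $g_k$ is separable, the roots in $\bar F$ and in $\bar F_\infty$ correspond, and $\op{G}_{F_\infty}$ acts on $\phi[\p^k]$ through the finite quotient $\op{Gal}(L_k/F_\infty)$. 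Consequently the restriction of $\rho$ to a decomposition group $\op{G}_{F_\infty}$ at $\infty$ factors through $\op{Gal}(M/F_\infty)$, where $M := \bigcup_{k\ge 0} L_k$ is the increasing union of the $L_k$ (increasing since $L_{k-1}\subseteq L_k$, by the description of $L_k$ as a splitting field of $G(x)$ over $L_{k-1}$).

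The key step is then Lemma~\ref{unramified lemma}, which gives that $L_k/F_\infty$ is unramified for every $k\ge 1$ (and $L_0 = F_\infty$ trivially). Since any subextension of the maximal unramified extension $F_\infty^{\ur}$ is again unramified, the inclusions $L_k\subseteq F_\infty^{\ur}$ for all $k$ force $M\subseteq F_\infty^{\ur}$. Therefore the inertia subgroup $\op{I}_\infty = \op{Gal}(\bar{F}_\infty/F_\infty^{\ur})$ of $\op{G}_{F_\infty}$ fixes $M$ pointwise, hence lies in the kernel of $\op{G}_{F_\infty}\to\op{Gal}(M/F_\infty)$, and so acts trivially on $T_\p(\phi)$. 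Transporting this to $\op{G}_F$: for any place $w$ of $\bar F$ lying over $\infty$ the inertia group $\op{I}_w$ is, up to conjugacy, contained in $\op{I}_\infty$, so $\rho(\op{I}_w) = 1$ — which is precisely the assertion that $\rho$ is unramified at $\infty$.

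I do not expect any genuine obstacle: all the arithmetic input depending on $\deg\p = 1$ has already been spent in Lemma~\ref{unramified lemma} — namely, that $f(T) = T - c$ has a single zero of degree one, so $f(T)^{-1}$ lies in the maximal ideal of $\cO_\infty$ and the normalized polynomial $G(x) = f(T)^{-1}\phi_{f(T)}(x)$ reduces modulo that ideal to the separable additive polynomial $x + x^{q^r}$, which is what makes Hensel's lemma applicable at each step $L_{k-1}\subseteq L_k$. The present argument is then essentially bookkeeping; the only points meriting a word of care are the identification $\phi[\p^k] = \{\text{roots of }g_k\}$ (immediate from $\p^k = (f(T)^k)$ and the definition of $\phi[a]$) and the fact that an increasing union of unramified local extensions remains unramified. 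An equivalent formulation, avoiding $M$ altogether, is to note that the mod-$\p^k$ representation $\rho_{\phi, f(T)^k}$ is unramified at $\infty$ for every $k$ — being realized on $\phi[\p^k]\subseteq L_k$ — and that a compatible system of unramified mod-$\p^k$ representations assembles to an unramified $\p$-adic representation.
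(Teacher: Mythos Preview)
Your proposal is correct and follows essentially the same approach as the paper: identify $F_\infty(\phi[\p^k])$ with $L_k$, invoke Lemma~\ref{unramified lemma} to see each $L_k$ is unramified, pass to the union $M=\bigcup_k L_k$, and conclude that the inertia group at $\infty$ lies in the kernel of the local representation. The paper's argument is slightly terser but structurally identical.
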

\begin{proof}
Choose an embedding of $\bar{F}$ into $\bar{F}_\infty$. This is equivalent to the choice of a prime of $\bar{F}$ that lies above $\infty$. It is clear that the field $F_\infty(\phi[\p^k])$ coincides with the splitting field of $g_k(x)$. From Lemma \ref{unramified lemma}, I find that $F_\infty(\phi[\p^\infty]):=\bigcup_k F_\infty(\phi[\p^k])=\bigcup_k L_k$ is an unramified extension of $F_\infty$. Consider the local Galois representation \[\rho_{|\infty}: \op{Gal}(\bar{F}_\infty/F_\infty)\rightarrow \op{GL}_r(\cO)\]and observe that $F_\infty(\phi[\p^\infty])$ is simply the field fixed by the kernel of $\rho_{|\infty}$. Thus, the inertia group at $\infty$ is contained in the kernel of $\rho_{|\infty}$, or in other words, $\rho$ is unramified at $\infty$.
\end{proof}

\subsection{Proof of the main theorem}

\begin{proof}[Proof of Theorem \ref{main thm}]
    It follows from Proposition \ref{prop 4.1} that $\rho$ is unramified at all primes $\mathfrak{l}\notin \{\p, \infty\}$, and from Proposition \ref{prop 4.4} that $\op{im}\rho$ has finite index in $\op{GL}_r(\cO)$ and that the restriction $\rho_{|\op{I}_{\p}}$ is of the desired form. When $\deg \p=1$, Proposition \ref{unramified propn} asserts that $\rho$ is also unramified at $\infty$.
\end{proof}

\subsection*{Declarations}
\begin{description}
    \item[Ethics approval and consent to participate] Not applicable, no animals were studied or involved in the preparation of this manuscript.
    \item[Consent for publication] The authors give their consent for the publication of the manuscript under review.
    \item[Availability of data and materials] No data was generated or analyzed in obtaining the results in this article.
    \item[Competing interests] There are no competing interests to report.
    \item[Funding] There are no funding sources to report.
    \item[Author contribution] The manuscript was prepared by the author and is wholly the author's contribution.
    \item[Acknowledgements] Not applicable.
\end{description}

\bibliographystyle{alpha}
\bibliography{references}
\end{document}